\documentclass[square]{sigplanconf}
\pdfoutput=1
\usepackage[utf8]{inputenc}
\usepackage[T1]{fontenc}
\usepackage{microtype}
\usepackage{amsmath,amssymb,amsthm,enumerate,tikz}
\PassOptionsToPackage{hyphens}{url}\usepackage{hyperref}
\usepackage{bussproofs,xcolor,stackengine}
\hypersetup{
  colorlinks,
  linkcolor={red!70!black},
  citecolor={green!50!black},
  urlcolor={blue!50!black}
}

\newcommand{\fa}[2]{\ensuremath{\Pi(#1),\ #2}}
\newcommand{\fax}[2]{\ensuremath{\Pi#1,\ #2}}
\newcommand{\ex}[2]{\ensuremath{\Sigma(#1),\ #2}}
\newcommand{\empt}{\ensuremath{\mathbf{0}}}
\newcommand{\unit}{\ensuremath{\mathbf{1}}}
\newcommand{\bool}{\ensuremath{\mathbf{2}}}
\DeclareMathOperator{\myap}{ap}
\DeclareMathOperator{\myapd}{apd}
\DeclareMathOperator{\colim}{colim}

\DeclareMathOperator{\Id}{Id}
\DeclareMathOperator{\ind}{ind}
\DeclareMathOperator{\transport}{transport}
\DeclareMathOperator{\quotient}{quotient}
\newcommand{\id}{\textnormal{id}}
\newcommand{\istrunc}[1]{\textnormal{is-}#1\textnormal{-type}}
\newcommand{\refl}{\textnormal{refl}}
\newcommand{\base}{\textnormal{base}}
\newcommand{\lp}{\textnormal{loop}}
\newcommand{\surf}{\textnormal{surf}}
\newcommand{\isprop}{\textnormal{is-prop}}
\newcommand{\isset}{\textnormal{is-set}}
\newcommand{\prop}{\textnormal{Prop}}
\newcommand{\ap}[2]{\ensuremath{\myap_{#1}(#2)}}
\newcommand{\apd}[2]{\ensuremath{\myapd_{#1}(#2)}}
\newcommand{\eps}{\ensuremath{\varepsilon}}
\newcommand{\N}{\mathbb{N}}
\newcommand{\U}{\mathcal{U}}
\newcommand{\sy}{\ensuremath{^{-1}}}
\newcommand{\myurl}[1]{\href{#1}{\path{#1}}}

\newtheorem{theorem}{Theorem}[section]

\newtheorem{proposition}[theorem]{Proposition}
\newtheorem{lemma}[theorem]{Lemma}
\newtheorem{corollary}[theorem]{Corollary}

\theoremstyle{definition}
\newtheorem{definition}[theorem]{Definition}

\newtheorem{example}[theorem]{Example}

\theoremstyle{remark}

\usetikzlibrary{arrows}
\usetikzlibrary{calc}

\usepackage{balance}

\begin{document}

 \setlength{\pdfpageheight}{\paperheight}
\setlength{\pdfpagewidth}{\paperwidth}

\toappear{}

\title{Constructing the Propositional Truncation using Non-recursive HITs}

\authorinfo{Floris van Doorn}
           {Carnegie Mellon University}
           {fpv@andrew.cmu.edu}

\maketitle

\begin{abstract}
In homotopy type theory, we construct the propositional truncation as a colimit, using only
non-recursive higher inductive types (HITs). This is a first step towards reducing recursive HITs to
non-recursive HITs. This construction gives a characterization of functions from the propositional
truncation to an arbitrary type, extending the universal property of the propositional
truncation. We have fully formalized all the results in a new proof assistant, Lean.
\end{abstract}

\category{F.4.1}{Mathematical Logic}{} 

\keywords Homotopy Type Theory, Propositional Truncation, Higher Inductive Types, Lean

\section{Introduction}

Homotopy Type Theory (HoTT) is based on a connection of intensional type theory with homotopy theory
and higher category theory~\cite{Awodey2009homotopy}. In HoTT a type can be viewed as a topological
space, up to homotopy. In this setting the notion of an inductive type can be generalized to a
notion of a \emph{Higher Inductive Type} (HIT)~\cite{HoTTbook}. When defining a HIT, you can specify
not only the point constructors, but also the path constructors of a type. For example, you can
define the circle $S^1$ as a HIT with one point constructor and one path constructor, generated by:
\begin{itemize}
  \item $\base : S^1$
  \item $\lp : \base = \base$
\end{itemize}
Using the Univalence Axiom~\cite{Voevodsky2014univalence}, you can prove that $\lp \neq
\refl_\base$, which means the circle is not just the unit type.

Another HIT is the propositional truncation $\|A\|$ of a type $A$. An inhabitant of $\|A\|$
indicates that $A$ is inhabited, without specifying a particular inhabitant. The type $\|A\|$ is
always a \emph{mere proposition}, meaning that any two inhabitants are equal. The propositional
truncation is similar to the bracket type~\cite{Awodey2004Propositions} in extensional type theory
and to the squash type~\cite{Constable1986NuPRL} in NuPRL. The propositional truncation of a type
$A$ can be specified as a HIT with these constructors:
\begin{itemize}
  \item $|{-}| : A \to \|A\|$
  \item $\eps : \fa{x, y : \|A\|}{x = y}$
\end{itemize}
Note that the path constructor quantifies over all elements of the type $\|A\|$, that is, the type
we are defining. This means that it is a \emph{recursive} HIT because we can apply the path
constructor $\eps$ recursively. An example of recursive application is (loosely speaking)
\begin{equation}
  \apd{\eps(x)}{\eps(y,z)} : \eps(x,y) \cdot \eps(y,z) = \eps(x,z)\label{eq:epsilon}
\end{equation}
for $x,y,z : \|A\|$.

HITs are not very well understood. There is no general theory of HITs that specifies which HITs are
allowed, or what form the constructors can have. Giving such a theory would also require giving a
model where all such HITs exist to ensure consistency of the resulting type theory. To do it, it
might be useful to use a reductive approach. Suppose we can reduce a broad class of HITs to a few
particular HITs. In this case, we only need a model of these particular HITs to get a model of the
broader class of HITs. This is similar to the situation for inductive types in extensional type
theory. Every inductive type in extensional type theory can be reduced to $\Sigma$-types and
W-types~\cite{Dybjer1997inductivetypes}, so a model which has $\Sigma$-types and W-types has all
inductive types.

This paper is a first step in such a reductive approach. In this paper we reduce the propositional
truncation --- the prototypical recursive HIT --- to just two non-recursive HITs, the
\emph{sequential colimit} and the \emph{one-step truncation}, which we will define now.

The one-step truncation $\{A\}$ of a type $A$ is a non-recursive version of the propositional
truncation. It has the following constructors:
\begin{itemize}
  \item $f : A \to \{A\}$
  \item $e : \fa{x, y : A}{f(x) = f(y)}$
\end{itemize}
The difference between the propositional truncation and the one-step truncation is that the path
constructor of the former quantifies over all elements in the newly constructed type, while the path
constructor of the latter only quantifies over all elements of $A$. This means that $\{A\}$ only
adds a path between any two points already existing in $A$. In $\{A\}$ we do not add higher paths in
the same way as in $\|A\|$, e.g. we cannot form an equality analogous to~\eqref{eq:epsilon} in
$\{A\}$.

We can easily give the universal property of $\{A\}$. We call a function $g : A \to B$ \emph{weakly
  constant} if for all $a,a' : A$ we have $g(a) = g(a')$. The attribute \emph{weakly} comes from the
fact that we do not impose other conditions about these equality proofs (in
Section~\ref{s:consequences} we discuss some other notions of constancy). Then the maps $\{A\}\to B$
correspond exactly to the weakly constant functions from $A$ to $B$.

We can also define the (sequential) colimit as a HIT. Given a sequence of types $A : \N\to\U$ with
maps $f:\fa{n:\N}{A_n\to A_{n+1}}$ the colimit is the HIT with the following constructors:
\begin{itemize}
  \item $i : \fa{n:\N}{A_n \to \colim(A,f)}$
  \item $g : \fa{n:\N}{\fa{a:A_n}{i_{n+1}(f_n(a))=i_n(a)}}$
\end{itemize}
We will sometimes leave the arguments from $\N$ implicit for $f$, $i$ and $g$.

Our construction of the propositional truncation is as follows. Given a type $A$, we can form the
sequence
\begin{equation}
A\stackrel{f}{\to}\{A\}\stackrel{f}{\to}\{\{A\}\}\stackrel{f}{\to}\{\{\{A\}\}\}\stackrel{f}{\to}
\cdots \label{eq:sequence}
\end{equation}
Then the colimit $\{A\}_\infty$ of this sequence is the propositional truncation of $A$. What we
mean by this is that the type $\{A\}_\infty$ satisfies exactly the formation, introduction,
elimination and computation rules used to define the propositional truncation. From this we
conclude:
\begin{itemize}
\item If we work in a type theory which does not have a propositional truncation operator, then we
  can define the map the propositional truncation to be the map $A\mapsto \{A\}_\infty$.
\item If we already have a propositional truncation operator, then $\{A\}$ and $\|A\|$ are
  equivalent types.
\end{itemize}

We will give an intuition why this construction works in Section~\ref{s:intuition}. The proof that
the construction is correct (given in Section~\ref{s:proof}) uses function extensionality, which is
a consequence of the Univalence Axiom~\cite[Section 4.9]{HoTTbook}. Our construction does not
otherwise use the Univalence Axiom (UA) itself, but some related results in this paper do, and in
those case we will explicitly mention that we use UA.

This construction has multiple consequences. We already discussed this work as a starting point for
a reductive approach to HITs. Another corollary is a generalization of the universal property of the
propositional truncation. The universal property of the propositional truncation states that the
type $\|A\|\to B$ is equivalent to $A\to B$ for mere propositions $B$. However, this leaves open the
question what the type $\|A\|\to B$ is for types $B$ which are not mere propositions. The
construction in this paper answers this question: the functions in $\|A\|\to B$ are precisely the
cocones over the sequence~\eqref{eq:sequence}. A different answer to this question is given
in~\cite{Kraus2014UniversalProperty} (see Section~\ref{s:related} for a comparison). Another
corollary of the construction is the following theorem. Given a weakly constant function $A\to A$,
then $A$ has split support, meaning $\|A\|\to A$. This is a known result~\cite[Theorem
  4.5]{Kraus2014anonymousexistence}, but we give an alternative proof in
Section~\ref{s:consequences}.

We have fully formalized this construction in the Lean proof assistant. Lean~\cite{Moura2015Lean} is
a interactive theorem prover under development by Leonardo de Moura at Microsoft Research. It is
based on a version of the calculus of inductive constructions, like Coq and Agda. It has two
libraries, a standard library for constructive and classical mathematics, and a HoTT library for
homotopy type theory. In the HoTT library we are trying a new way to deal with HITs. Instead of Dan
Licata's trick~\cite{Licata2011trick} we are experimenting with the reductive approach. We have two
primitive HITs: ``quotients'' (not to be confused with set-quotients) and the $n$-truncation. From
this, we can define all other commonly used HITs. For more information about the formalization and
Lean, see Section~\ref{s:formalization}.

\section{Preliminaries}\label{s:prelim}

In this section we will present some basic definitions from~\cite{HoTTbook} used in this paper. We
follow the informal style of writing proofs from~\cite{HoTTbook}. In particular, if $f : A \to B \to
C$ is a binary function, we write $f(x,y)$ for the application $(f\ x)\ y$.

The basis of HoTT is intensional Martin L\"{o}f Type Theory. The \emph{path type} (also called the
\emph{identification type}, \emph{identity type}, \emph{equality type}) is an inductive type which
for each type $A$ and element $a : A$ gives a type family $\Id_A(a) : A \to \U$ which is generated
by $\refl_a : \Id_A(a, a)$. The type $\Id_A a b$ is also written $a =_A b$ or $a = b$. Elements $p :
a = b$ are called \emph{paths}, \emph{identifications} or \emph{equalities}. Elements of $a = a$ are
called \emph{loops}. The identity type is \emph{intensional}, which means that it can have multiple
inhabitants which are not equal. The type $a = b$ should not be confused with the judgement $a
\equiv b$ stating that $a$ and $b$ are \emph{definitionally} or \emph{judgmentally} equal, which is
a meta-theoretic concept. We can also form the path type between to paths. That is, if $p, q : a =_A
b$, then $p =_{a=_Ab} q$ is a \emph{2-dimensional path type}. In a similar manner we can define the
\emph{higher dimensional path types}. The following rule is the \emph{path induction} principle for
the identity type.

\begin{center}
\AxiomC{\stackanchor{$A : \U \qquad a : A \qquad P : \fa{b : A}{a =_A b \to \U}$} {$\rho :
    P(a,\refl_a) \qquad b : A \qquad p : a =_A b$}} \UnaryInfC{$\ind_=(P,\rho,b,p): P(b,
  p)$}\DisplayProof{}
\end{center}

Informally, path induction states that if we want to prove some property $P$ about an arbitrary
path, it is sufficient to prove it for reflexivity paths. The corresponding computation rule is
$\ind_=(P,\rho, a, \refl_{a})\equiv \rho$.

Using the path induction principle, we can define the following basic operations. Given elements $x,
y, z : A$ and paths $p : x = y$ and $q : y = z$ we define the \emph{concatenation} $p\,\cdot\,q : x
= z$ and the \emph{inverse} $p\sy : b = a$. If $f : A \to B$ is a function we can apply it to paths
to get $\ap fp : f(x) = f(y)$. If $P : A \to \U$ is given then $p$ induces a function $P(x) \to
P(y)$. In particular, if $u : P(x)$, then we have $\transport^P(p,u) : P(y)$. We also write $p_*(u)$
for $\transport^P(p,u)$. These definitions satisfy the obvious coherence laws such as
$\refl_x\,\cdot\,p = p$ and $\ap{f}{p\sy}=(\ap fp)\sy$.

For two functions $f,g : A \to B$ we write $f\sim g$ for the type of \emph{homotopies} between $f$
and $g$, which are the proofs that $f$ and $g$ are pointwise equal: $\fa{x : A}{f(x) = g(x)}$. A
function $f : A \to B$ is an \emph{equivalence} if it has a left and a right inverse, i.e. if there
are functions $g, h : B \to A$ such that $g \circ f\sim \id_A$ and $f\circ h\sim \id_B$. If $f$ is
an equivalence, then $f$ also has a two-sided inverse, written $f\sy$. The fact that $f$ is an
equivalence is written $f : A \simeq B$, and $A$ and $B$ are called \emph{equivalent} types.

If $f$ and $g$ are equal functions, then $f$ and $g$ are homotopic. This means that we have a map
$(f = g) \to (f \sim g)$. \emph{Function Extensionality} is the axiom that this map is an
equivalence. Similarly, if two types $A$ and $B$ are equal, then they are equivalent, so we have a
map $(A = B) \to (A \simeq B)$. The \emph{Univalence Axiom} states that this map is an equivalence.

The types in a universe are stratified into a hierarchy of $n$-types for $n\geq-2$. A type $A$ is a
$(-2)$-type or \emph{contractible} if it has a unique point, that is, if $\ex{x : A}{\fa{y : A}{x =
    y}}$. In this case we can prove that $A\simeq\unit$, where $\unit$ is the unit type. A type $A$
is an $(n+1)$-type if all its path types are $n$-types, i.e., if $\fa{x\ y : A}{\istrunc{n}(x =
y)}$. This hierarchy is inclusive in the sense that every $n$-type is an $(n+1)$-type. The $n$-types
are the types where the path structure becomes trivial for high dimensions, all paths above
dimension $n+2$ are trivial (inhabited by a unique element, i.e. contractible).

The $(-1)$-types are called \emph{mere propositions}, and a type is a mere proposition iff all its
inhabitants are equal, i.e. $\fa{x\ y : A}{x = y}$. The $0$-types are called \emph{sets}, and are
the types where all equality types are mere propositions, i.e. where the uniqueness of identity
proofs holds. A type $A$ is a set iff it satisfies \emph{Axiom K}: every loop is equal to
reflexivity. The $1$-types are the types where the equality types are sets, and so on.

\begin{example}\mbox{}
\begin{itemize}
\item the natural number $\N$ and the booleans $\bool$ are sets, but not mere propositions.
\item The circle $S^1$ is a 1-type, but not a set.
\item For a function $f : A \to B$, the statement ``$f$ is an equivalence'' is a mere proposition.
\item For a type $A$ and $n\geq-2$, the statement ``$A$ is an $n$-type'' is a mere proposition.
\item The 2-sphere $S^2$ is defined to be a higher inductive type with constructors $\base : S^2$
  and $\surf : \refl_\base = \refl_\base$. Note that $\surf$ is a 2-dimensional path
  constructor. $S^2$ is strongly expected not to be $n$-truncated for any $n$, but this is not yet
  proven.
\end{itemize}
\end{example}

We write $\prop:\equiv \ex{X:\U}{\isprop(X)}$ for the type of mere propositions. Given $Y:\prop$
we also write $Y$ the underlying type of $Y$.

For any type $A$ and $n\geq-1$ we can define the \emph{$n$-truncation} $\|A\|_n$ of $A$ which is
defined to have the same elements as $A$ but all $(n+1)$-dimensional paths equated. We already saw
the \emph{propositional truncation}, which is the $(-1)$-truncation of $A$, where we identify all
elements in $A$ with each other. We write $\|A\|$ for the propositional truncation. The
\emph{set-truncation} or $0$-truncation identifies all parallel paths in $A$ (two paths are said to
be parallel if they have the same type). In general, the $n$-truncation $\|A\|_n$ can be defined as
a HIT~\cite[Section 7.3]{HoTTbook}. The type $\|A\|_n$ is an $n$-type, and it is universal in the
sense that functions $A\to B$ where $B$ is an $n$-type factor through $\|A\|_n$.

A notion related to an $n$-type is an \emph{$n$-connected type}, which is a type that has trivial
path spaces below dimension $n$. Formally, $A$ is $n$-connected if $\|A\|_n$ is contractible. A type
which is $0$-connected is called \emph{connected} and a type which is $1$-connected is called
\emph{simply connected}, where the names are taken from the corresponding notions in homotopy
theory.

\section{Intuition}\label{s:intuition}
In this section we will present an intuition about the correctness of the construction. We will also
give some properties of one-step truncations.

To give an intuition why this constructions works, consider the type of booleans $\bool$ with its
two inhabitants $0,1:\bool$, and take its propositional truncation: $\|\bool\|$. Of course, this
type is equivalent to the interval (and any other contractible type), but we want to study its
structure a little more closely. The path constructor of the propositional truncation $\eps$ gives
rise to two paths between $|0|$ and $|1|$, namely $\eps(|0|,|1|)$ and $(\eps(|1|,|0|))^{-1}$. Since
the resulting type is a mere proposition, these paths must be equal. And indeed, we can explicitly
construct a path between them by using the continuity of $\eps$. To do this, we show that for every
$x : \|\bool\|$ and every $p : |0| = x$ we have $p = \eps(|0|,|0|)^{-1}\ \cdot\ \eps(|0|,x)$. We can
apply path induction on $p$, and it is trivial if $p$ is reflexivity. Since the right hand side does
not depend on $p$, this shows that any two elements in $|0| = |1|$ are equal. Note that this is just
the proof that any proposition is a set given in~\cite[Lemma 3.3.4]{HoTTbook}.

Now consider the one-step truncation of the booleans, $\{\bool\}$. This is a type with points
$a:\equiv f(0)$ and $b:\equiv f(1)$ and four basic paths, as displayed in Figure~\ref{f:tbool}. The
path constructor gives two paths from $a$ to $b$, namely $p:\equiv e(0,1)$ and
$q:\equiv(e(1,0))^{-1}$. Also, we have two loops $e(0,0) : a = a$ and $e(1,1) : b = b$. We can also
concatenate these paths to get new paths. Note that the paths $p$ and $q$ are \emph{not} equal. This
is because $p$ and $q$ are different path constructors of $\{\bool\}$, so we can use the elimination
principle of the one-step truncation to send them to any pair of parallel paths. So $p=q$ would
imply that all types are sets, contradicting the Univalence Axiom. Applying $\{{-}\}$ to $\bool$ is
the first step towards the propositional truncation: all points are equal, but these equalities are
not equal themselves.

\begin{figure}
\begin{center}
\begin{tikzpicture}[>=stealth',auto,node distance=3cm,
  thick,main node/.style={font=\sffamily\Large\bfseries},text height=1.5ex] \node[main
    node,label=above:$a$] (fa) at (0,0) {$\bullet$}; \node[main node,label=above:$b$] (fb) at (2,0)
  {$\bullet$}; \path[every node/.style={font=\sffamily\small}] (fa) edge [loop left, in=150,
    out=210,looseness=8,->] node {$e(0,0)$} (fa) edge [bend left =20,->] node [above] (p){$p$} (fb)
  edge [bend right=20,->] node [below] (q){$q$} (fb) (fb) edge [loop right, in=30,
    out=-30,looseness=8,->] node {$e(1,1)$} (fb);
\end{tikzpicture}
\end{center}
\caption{The basic paths in $\{\bool\}$}
\label{f:tbool}
\end{figure}

Now we can take the one-step truncation again to get the type $\{\{\bool\}\}$. In this type we have
the points $f(a)$ and $f(b)$ and paths $\ap{f}{p}$ and $\ap{f}{q}$ between them. Now these paths
\emph{are} equal, as displayed in Figure~\ref{f:ttbool}. If we view the space $\{\{\bool\}\}$ as
taking the space $\{\bool\}$ and adding additional paths between the points, this means we added
sufficiently many paths to create a surface between the paths $p$ and $q$. The fact that $\ap{f}{p}$
and $\ap{f}{q}$ are equal is a consequence of the following lemma.

\begin{lemma}\label{l:apconstant}
  If $g:X\to Y$ is weakly constant, then for every $x, x' : X$, the function $\text{ap}_g:x=x'\to
  g(x)=g(x')$ is weakly constant. That is, $\ap gp=\ap gq$ for all $p,q:x=x'$.
\end{lemma}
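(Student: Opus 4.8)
The plan is to derive an explicit formula for $\ap{g}{p}$ in which $p$ does not appear; weak constancy of $\ap{g}{{-}}$ then follows immediately. Fix a witness $c : \fa{a\ a' : X}{g(a) = g(a')}$ of the weak constancy of $g$ and fix $x, x' : X$. The first observation is that, for this fixed $x$, the assignment $a \mapsto c(x,a)$ is nothing but a homotopy from the constant function $\lambda a.\,g(x)$ to $g$ itself.

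Given that, I would apply the standard naturality property of homotopies (the commuting-square lemma, \cite[Lemma 2.4.3]{HoTTbook}): for any homotopy $H$ between functions $h_1, h_2 : X \to Y$ and any $p : x = x'$ one has $H(x)\,\cdot\,\ap{h_2}{p} = \ap{h_1}{p}\,\cdot\,H(x')$. Instantiating with $h_1 :\equiv \lambda a.\,g(x)$, $h_2 :\equiv g$ and $H :\equiv \lambda a.\,c(x,a)$, and using $\ap{(\lambda a.\,g(x))}{p} = \refl_{g(x)}$, this square collapses to $c(x,x)\,\cdot\,\ap{g}{p} = c(x,x')$, whence $\ap{g}{p} = c(x,x)\sy\,\cdot\,c(x,x')$. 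Since the right-hand side mentions neither $p$ nor any path in $x = x'$ at all, for arbitrary $p, q : x = x'$ both $\ap{g}{p}$ and $\ap{g}{q}$ equal $c(x,x)\sy\,\cdot\,c(x,x')$, and hence equal each other; concatenating the two proofs gives the desired $\ap{g}{p} = \ap{g}{q}$.

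An equivalent route, if one prefers not to cite the naturality lemma, is to obtain the key identity $c(x,x)\,\cdot\,\ap{g}{p} = c(x,x')$ directly as $\apd{(\lambda a.\,c(x,a))}{p}$, after computing that transport in the family $\lambda a.\,(g(x) = g(a))$ along $p$ acts by post-composition with $\ap{g}{p}$ --- a one-step path induction. Either way I do not anticipate a real obstacle: the only genuine idea is recognizing that weak constancy packages $g$ together with a homotopy out of a constant map, which forces $\ap{g}{p}$ to be independent of $p$ up to the fixed correction term $c(x,x)\sy$; everything else is routine groupoid bookkeeping, and the Lean formalization should be correspondingly short.
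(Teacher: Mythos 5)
Your proof is correct and takes essentially the same approach as the paper: both establish the $p$-independent identity $\ap gp = c(x,x)\sy\,\cdot\,c(x,x')$ and conclude immediately from the fact that the right-hand side does not mention $p$. The only cosmetic difference is that you obtain this identity by instantiating the naturality-of-homotopies lemma at the homotopy $c(x,{-})$ out of the constant map, whereas the paper proves the very same equation by a direct one-line path induction on $p$ (which is also how the naturality lemma itself is proved).
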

\begin{proof}
  Let $q : \fa{x, y : X}{g(x)=g(y)}$ be the proof that $g$ is weakly constant, and fix $x : X$. We
  first prove that for all $y : X$ and $p : x = y$ we have
  \begin{equation}\label{e:apconstanteq}
    \ap{g}{p} = q(x,x)^{-1}\ \cdot\ q(x,y).
  \end{equation}
  This follows from path induction, because if $p$ is reflexivity, then $\ap{g}{\refl_x} \equiv
  \refl_{g(x)} = q(x,x)^{-1}\ \cdot\ q(x,x)$. The right hand side of \eqref{e:apconstanteq} does not
  depend on $p$, hence $\text{ap}_g$ is weakly constant.
\end{proof}
Since $f$ is weakly constant (as the path constructor $e$ shows), we conclude that $\ap{f}{p} =
\ap{f}{q}$. So we see that in $\{\{\bool\}\}$ the paths $\ap fp$ and $\ap fq$ are equal. More
generally, any pair of parallel paths in $A$ will be equal in $\{A\}$ after applying $f$. However,
in $\{\{\bool\}\}$ we also add \emph{new} paths between $f(a)$ and $f(b)$, for example $e(a,b)$ (see
Figure~\ref{f:ttbool}). This path is not equal to the old paths $\ap{f}{p}$ or $\ap{f}{q}$. More
generally, we have the following proposition. We will not use this proposition for the proof of
\ref{t:main}.

\begin{figure}
\begin{center}
\begin{tikzpicture}[>=stealth',auto,node distance=3cm,
  thick,main node/.style={font=\sffamily\Large\bfseries},text height=1.5ex] \node[main
    node,label=left:$f(a)$] (fa) at (0,0) {$\bullet$}; \node[main node,label=right:$f(b)$] (fb) at
  (3,0) {$\bullet$}; \path[every node/.style={font=\sffamily\small}] (fa) edge [bend left =20,->]
  node [above] (p){$\ap fp$} (fb) edge [bend right=20,->] node [below,text height=1ex] (q){$\ap fq$}
  (fb) edge [bend right=90,->] node [below] {$e(a,b)$} (fb); \draw[double,-latex,shorten
    >=1mm,shorten <=1mm] (p)--(q);
\end{tikzpicture}
\end{center}
\caption{Some paths in $\{\{\bool\}\}$.}
\label{f:ttbool}
\end{figure}

\begin{proposition}\label{p:notconnected}
For this proposition we assume the Univalence Axiom. Let $A$ be a type.
\begin{enumerate}
\item \label{e:1} For any path $p:a =_A b$ we have $\ap fp \neq e(a,b)$.
\item \label{e:2} If the type $\|\{A\}\|_1$ is a set, then it is empty. That is,
$$\isset(\|\{A\}\|_1) \to \|\{A\}\|_1 \to \empt.$$ In particular, $\{A\}$ is not simply connected.
\item $\|\{A\}\|_0\simeq \|A\|$, hence $\|\{A\}\|_0$ is a mere proposition. In particular, if $A$ is
  inhabited, then $\{A\}$ is connected.
\end{enumerate}
\end{proposition}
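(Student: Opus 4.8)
The plan is to carry out all three parts with the help of the circle $S^1$, exploiting the universal property of $\{-\}$ together with the fact that a \emph{weakly} constant map carries no coherence data, so that we may legitimately send every generator $e(x,y)$ to one and the same nontrivial loop. Concretely, feeding the recursor of $\{-\}$ the constant map $g := \lambda x.\base : A\to S^1$ together with the weak-constancy witness $h := \lambda x\,y.\lp$ produces a map $\tilde g:\{A\}\to S^1$ with $\tilde g(f(x))\equiv\base$ and $\ap{\tilde g}{e(x,y)}=\lp$ for all $x,y:A$. This single map $\tilde g$ drives parts~\ref{e:1} and~\ref{e:2}, and I expect part~\ref{e:1} --- or rather the idea behind it, mapping into a $1$-type with a nontrivial loop via an \emph{incoherent} weak-constancy witness --- to be the real crux; the rest is mostly threading universal properties together.

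For part~\ref{e:1}: suppose $\ap fp = e(a,b)$ for some $p:a=_Ab$. Applying $\ap{\tilde g}{-}$ to both sides, the left side is $\ap{\tilde g}{\ap fp}=\ap{\tilde g\circ f}{p}$, which equals $\refl_\base$ because $\tilde g\circ f$ is the constant map at $\base$; the right side is $\ap{\tilde g}{e(a,b)}=\lp$. Hence $\lp=\refl_\base$, contradicting the consequence of the Univalence Axiom that the loop of $S^1$ is nontrivial. Therefore $\ap fp\neq e(a,b)$.

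For part~\ref{e:2}: assume $\isset(\|\{A\}\|_1)$ and fix $z:\|\{A\}\|_1$; we must produce an element of $\empt$. First, since $\|A\|$ is a mere proposition and hence a $1$-type, the map $\{A\}\to\|A\|$ obtained from the (weakly) constant function $|{-}|:A\to\|A\|$ factors through $\|\{A\}\|_1$, so applying it to $z$ yields a term of $\|A\|$; and because $\empt$ is a proposition, the elimination principle of $\|{-}\|$ lets us proceed assuming an honest $a:A$. Now $S^1$ is a $1$-type, so $\tilde g$ factors as $\bar g\circ|{-}|_1$ with $\bar g:\|\{A\}\|_1\to S^1$; since $\|\{A\}\|_1$ is a set the loop $\ap{|{-}|_1}{e(a,a)}$ at $|f(a)|_1$ is $\refl$, so $\ap{\tilde g}{e(a,a)}$, being a conjugate of $\ap{\bar g}{\refl}=\refl_\base$, equals $\refl_\base$ --- contradicting $\ap{\tilde g}{e(a,a)}=\lp\neq\refl_\base$. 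The ``in particular'' is then immediate: a contractible type is a set and is inhabited, so if $\|\{A\}\|_1$ were contractible the statement just proved would make it empty.

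For the third claim, the key sub-step is that $\|\{A\}\|_0$ is a mere proposition. Since $\|\{A\}\|_0$ is a set, each $z=w$ with $z,w:\|\{A\}\|_0$ is a proposition, so we may prove $\prod_{z,w}(z=w)$ by $0$-truncation induction followed by $\{-\}$-induction in each variable (every intermediate goal family being a proposition), reducing the task to exhibiting a path $|f(a)|_0=|f(b)|_0$ --- and that is $\ap{|{-}|_0}{e(a,b)}$. Given prop-ness, $\phi:\|\{A\}\|_0\to\|A\|$ is the factorization through $\|\{A\}\|_0$ of the map $\{A\}\to\|A\|$ above (valid since $\|A\|$ is a set), and $\psi:\|A\|\to\|\{A\}\|_0$ is the factorization through $\|A\|$ of $a\mapsto|f(a)|_0$ (valid since $\|\{A\}\|_0$ is now known to be a proposition); a pair of maps both ways between mere propositions assembles into an equivalence $\|\{A\}\|_0\simeq\|A\|$. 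The closing remark follows: if $A$ is inhabited then so is $\|A\|\simeq\|\{A\}\|_0$, and an inhabited proposition is contractible, i.e.\ $\{A\}$ is connected.
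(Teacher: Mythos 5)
Your proposal is correct. Parts 1 and 3 follow essentially the same path as the paper: part 1 uses the identical circle-valued map ($\base$ on points, $\lp$ on the path constructor) and the identical computation deriving $\refl_\base=\lp$; part 3 proves prop-ness of $\|\{A\}\|_0$ by the same double induction ending in $\ap{|{-}|_0}{e(a,b)}$ and then assembles the equivalence from maps in both directions. Part 2, however, is a genuinely different argument. The paper reduces to $x:A$ by truncation- and one-step-induction and then applies the characterization of path spaces of truncations (Theorem 7.3.12 of the HoTT book) twice: first to see that $\|f(x)=f(x)\|_0$ is a mere proposition because $\|\{A\}\|_1$ is a set, then to conclude that $e(x,x)$ and $\refl_{f(x)}$ are \emph{merely} equal, at which point part 1 (with $p=\refl_x$) yields the contradiction. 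You instead factor the circle-valued map through $\|\{A\}\|_1$ --- legitimate because $S^1$ is a $1$-type --- and use the fact that every loop in a set is reflexivity to kill $\ap{|{-}|_1}{e(a,a)}$, so that $\lp=\ap{\tilde g}{e(a,a)}=\refl_\base$ falls out directly (the conjugation remark correctly handles a merely homotopical factorization, though the standard recursor makes it definitional). Your route avoids Theorem 7.3.12 entirely and does not even use part 1 as a lemma, at the price of invoking the $1$-truncatedness of $S^1$, itself a univalence-dependent fact via $\Omega(S^1)\simeq\mathbb{Z}$; both arguments therefore sit squarely under the UA hypothesis, with the paper's being more self-contained relative to its own part 1 and yours being shorter once the standard facts about the circle are in hand.
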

\begin{proof}
  \textbf{Part 1}. Assume that $\ap fp=e(a,b)$. Now define a map $h:\{A\}\to S^1$ by pattern
  matching as:
  \begin{itemize}
  \item $h(f(a)):\equiv \base$ for all $a:A$;
  \item $\ap h{e(a,b)}:=\lp$ for all $a,b:A$.
  \end{itemize}
  We compute
  \begin{align*}
    \refl_\base&=\ap{\lambda x, \base}p\\ &\equiv\ap{h\circ f}p\\ &=\ap h{\ap f
      p}\\ &=\ap{h}{e(a,b)}\\ &=\lp.
  \end{align*}
  This is a contradiction. If the universe does not contain the circle, then choose any other type
  $X$ with a point $x : X$ and a loop $p : x = x$ such that $p\neq \refl_x$. The Univalence Axiom
  ensures that such a type exists. In this case we can define $h$ similarly, mapping into $X$.

  \textbf{Part 2}. Suppose that $\|\{A\}\|_1$ is an inhabited set. We will construct an element of
  \empt. Let $z:\|\{A\}\|_1$ be an inhabitant. We can apply $\|{-}\|_1$-induction and then
  $\{{-}\}$-induction on $z$. The path constructors are satisfied automatically, since we are
  proving a mere proposition. This means that we only have to show it for point constructors, hence
  we may assume that $A$ is inhabited. So assume $x:A$.

  We have two inhabitants of $f(x)=f(x)$, namely $e(x,x)$ and $\refl_{f(x)}$. We can use the fact
  that $\|\{A\}\|_1$ is a set to show that these elements are merely equal. First we use the
  characterization of path spaces in truncated types~\cite[Theorem 7.3.12]{HoTTbook} (using UA) to
  note:
  \begin{align*}
    \|f(x)=f(x)\|_0&\simeq \left(|f(a)|_1 =_{\|\{A\}\|_1}|f(a)|_1\right).
  \end{align*}
  The right hand side is an equality type in a set, hence it is a mere proposition, and so is the
  left hand side. Now apply Theorem 7.3.12 again to get:
  \begin{align*}
    \|e(x,x)=\refl_{f(x)}\|&\simeq \left(|e(x,x)|_0=_{\|f(x)=f(x)\|_0}|\refl_{f(x)}|_0\right).
  \end{align*}
  The right hand side is an equality in a mere proposition, hence it is contractible. So the left
  hand side is also contractible, and in particular inhabited. Since we are proving a mere
  proposition, we may assume that $e(x,x)=\refl_{f(x)}$. The contradiction follows from
  part~\ref{e:1}.

  \textbf{Part 3}. To prove the equivalence, we first prove that $\|\{A\}\|_0$ is a mere
  proposition. Given $x, y : \|\{A\}\|_0$, we have to show that $x=y$. This is an equality type in a
  set, hence a mere proposition, so we may apply $\|{-}\|_0$-induction and then $\{{-}\}$-induction
  on both $x$ and $y$. The remaining goal is to show that for all $a, b : A$ we have
  $|f(a)|=|f(b)|$. This type is inhabited by $\ap{|{-}|}{e(a,b)}$.

  Now the equivalence $\|\{A\}\|_0\simeq \|A\|$ is easy, because we know that both types are mere
  propositions, hence we only need to define maps in both ways. We can define those maps easily by
  induction.
\end{proof}
We conclude that if $A$ is an inhabited type then $\{A\}$ is connected but not simply connected. So
when we apply $\{{-}\}$ to an inhabited type $A$ we add equalities in such a way that we make every
existing two points equal and any two existing parallel paths equal, but we also add new paths which
are not equal to any existing paths in $A$. Hence we have to repeat this $\omega$ many times: at
every step we kill off the existing higher equality structure, but by doing so we create new higher
equality structure. After $\omega$ many times we have killed of all the higher structure of $A$ and
are left with its propositional truncation.

\section{The Main Theorem}\label{s:proof}

Now we will prove that the construction of the propositional truncation works, in the sense that the
construction $A\mapsto \{A\}_\infty$ has the same formation, introduction, elimination and
computation rules for the propositional truncation.

We recall the definition of $\{A\}_\infty$. Given a type $A$, we define a sequence
$\{A\}_{-}:\N\to\U$ by
\begin{align}
\begin{aligned}
\{A\}_0&:\equiv A\\ \{A\}_{n+1}&:\equiv \{\{A\}_n\}
\end{aligned}
\label{e:An}
\end{align}
We have map $f_n:\equiv f : \{A\}_n\to \{A\}_{n+1}$ which is the constructor of the one-step
truncation. We define $\{A\}_\infty=\colim(\{A\}_{-},f_{-})$. This is the formation rule of the
propositional truncation (note that $\{A\}_\infty$ lives in the same universe as $A$).

We also easily get the point constructor of the propositional truncation, because that is just the
map $i_0:A\to \{A\}_\infty$. The path constructor $\fa{x, y : \{A\}_\infty}{x = y}$, i.e. the
statement that $\{A\}_\infty$ is a mere proposition, is harder to define. We will postpone this
until after we have defined the elimination and computation rules.

The elimination principle --- or induction principle --- for the propositional truncation is the
following statement. Suppose we are given a family of propositions $P : \{A\}_\infty \to \prop$ with
a section $h : \fa{a : A}{P(i_0(a))}$. We then have to construct a map $k : \fa{x :
  \{A\}_\infty}{P(x)}$. To construct $k$, take an $x : \{A\}_\infty$. Since $x$ is in a colimit, we
can apply induction on $x$. Notice that we construct an element in $P(x)$, which is a mere
proposition, so we only have to define $k$ on the point constructors. This means that we can assume
that $x\equiv i_n(a)$ for some $n : \N$ and $a:\{A\}_n$. Now we apply induction on $n$.

If $n\equiv0$, then we can choose $k(i_0(a)):\equiv h(a):P(i_0(a))$.

If $n\equiv \ell+1$ for some $\ell:\N$, we know that $a:\{\{A\}_\ell\}$, so we can induct on
$a$. The path constructor of this induction is again automatic. For the point constructor, we can
assume that $a\equiv f(b)$. In this case we need to define $k(i_{\ell+1}(f(b))) :
P(i_{\ell+1}(f(b)))$. By induction hypothesis, we have an element $y : P(i_\ell(b))$. Now we can
transport $x$ along the equality $(g_\ell(b))\sy : i_\ell(b)=i_{\ell+1}(f(b))$. This gives the
desired element in $P(i_{\ell+1}(f(b)))$.

We can write the proof in pattern matching notation:
\begin{itemize}
\item $k(i_0(a)):\equiv h(a)$
\item $k(i_{n+1}(f_n(a))):\equiv(g_n(b))_*\sy(k(i_n(b)))$
\end{itemize}
The definition $k\ (i_{0}\ a) :\equiv h\ a$ is also the judgmental computation rule for the
propositional truncation.

For the remainder of this section we will prove that $\{A\}_\infty$ is a mere proposition. We will
need the following lemma.

\begin{lemma}\label{l:pieq}
  Let $X$ be a type with $x : X$. Then the type $\fa{y:X}{x=y}$ is a mere proposition.
\end{lemma}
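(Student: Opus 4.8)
The plan is to show that the type $A :\equiv \fa{y:X}{x=y}$ is a mere proposition by proving $\fa{c\ c' : A}{c = c'}$ directly, exploiting the observation that a single inhabitant of $A$ already witnesses that $X$ is contractible. So the statement is not proved ``in a vacuum'': since the goal has the shape $A \to A \to (\text{something})$, I am free to use one of the two supplied elements of $A$ to extract structural information about $X$.

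Concretely, I would first assume $c, c' : A$; the goal is $c = c'$. The pair $(x, c)$ inhabits $\ex{z:X}{\fa{y:X}{z=y}}$, i.e.\ it is a proof that $X$ is contractible. Since every identity type of a contractible type is again contractible (\cite[Lemma 3.11.10]{HoTTbook}), each type $x =_X y$ is contractible, hence a mere proposition. Therefore $c(y) = c'(y)$ holds for every $y : X$, and by function extensionality this gives $c = c'$, as required.

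The main (and very mild) obstacle is simply to notice this move — that one of the hypothesised elements may be consumed to collapse all the path types of $X$, after which $A$, being a dependent product of such path types, is itself a product of mere propositions. If one prefers a slightly more modular packaging, the same idea shows $A \to (\text{$A$ is contractible})$: from $c : A$ each $x = y$ is contractible, so $A$ is a product of contractible types and hence contractible by function extensionality; and any type $A$ with $A \to (\text{$A$ is contractible})$ is a mere proposition (apply the contraction obtained from either endpoint). Either way the only external inputs are function extensionality, already available here, and the standard fact that contractible types have contractible identity types.
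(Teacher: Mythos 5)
Your proof is correct and follows essentially the same route as the paper: use one of the supplied inhabitants to conclude that $X$ is contractible with center $x$, deduce that each path type $x = y$ is then contractible, and finish with function extensionality. The paper merely packages this as ``inhabited implies contractible'' rather than directly equating two given elements, which is an immaterial difference.
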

\begin{proof}
  To prove that $\fa{y:X}{x=y}$ is a mere proposition, we assume that it is inhabited and show that
  it is contractible. Let $f : \fa{y : X}{x = y}$. From this, we conclude that $X$ is contractible
  with center $x$. Now given any $g : \fa{y : X}{x = y}$. We know that $f$ and $g$ are pointwise
  equal, because their codomain is contractible. By function extensionality we conclude that $f=g$,
  finishing the proof.
\end{proof}

To prove that $\{A\}_\infty$ is a mere proposition, we need to show $\fa{x, y :
  \{A\}_\infty}{x=y}$. Since $\fa{y : \{A\}_\infty}{x=y}$ is a mere proposition, we can use the
induction principle for the propositional truncation on $x$, which we have just proven for
$\{A\}_\infty$. This means we only have to show that for all $a : A$ we have $\fa{y :
  \{A\}_\infty}{i_0(a)=y}$. We do not know that $i_0(a)=y$ is a mere proposition,\footnote{Of
  course, we do know that it is a mere proposition after we have finished the proof that
  $\{A\}_\infty$ is a mere proposition.} so we will just use the regular induction principle for
colimits on $y$. We then have to construct two inhabitants of the following two types:
\begin{enumerate}
\item For the point constructor we need $p(a,b) : i_0(a) = i_n(b)$ for all $a : A$ and $b :
  \{A\}_n$.
\item We have to show that $p$ respects path constructors:
  \begin{equation}
    p(a,f(b))\ \cdot\ g(b) = p(a,b).\label{e:coh}
  \end{equation}
\end{enumerate}

We have a map $f^n:A \to \{A\}_n$ defined by induction on $n$, which repeatedly applies $f$. We also
have a path $g^n(a): i_n(f^n(a)) = i_0(a)$ which is a concatenation of instances of $g$.

\begin{figure}\begin{center}
\begin{tikzpicture}[>=stealth',auto,node distance=3cm,
  thick,main node/.style={font=\sffamily\Large\bfseries},text height=2ex]

  \node[main node,label=left:$a$] (a) at (0,0) {$\bullet$}; \node[main node,label=left:$f^n(a)$]
  (fa) [above of=a] {$\bullet$}; \node[main node,label=left:$f^{n+1}(a)$] (ffa) [above of=fa]
       {$\bullet$}; \tikzset{node distance=2.5cm}; \node[main node,label=right:$b$] (b) [right
         of=fa] {$\bullet$}; \node[main node,label=right:$f(b)$] (fb) [right of=ffa] {$\bullet$};
       \tikzset{node distance=2cm}; \node (Am1) [right of=fb] {$\{A\}_{n + 1}$}; \node (Am) [right
         of=b] {$\{A\}_n$}; \node (An) [right of=a] {$A$}; \draw ($(ffa)!0.4!(fb)$) ellipse (2.7cm
       and 1cm); \draw ($(fa)!0.4!(b)$) ellipse (2.4cm and 1cm); \draw (a) circle (1cm); \path[every
         node/.style={font=\sffamily\small}] (fa) edge node [left] {$g^n$} (a) edge node [right]
       {$g$} (ffa) (a) edge [bend left=60] node [above left] {$g^{n+1}$} (ffa) (fb) edge [bend
         right=20] node [above] {$e$} (ffa) edge node [left] {$g$} (b);
\end{tikzpicture}
\caption{The definition of $p$. The applications of $i$ and the arguments of the paths are
  implicit.}
\label{f:defp}
\end{center}\end{figure}
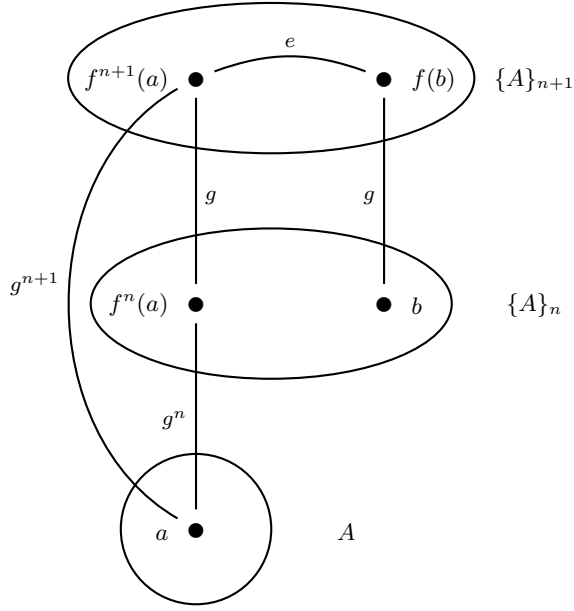

We can now define $p(a,b)$ as displayed in Figure~\ref{f:defp}, which is the concatenation
\begin{align*}
i_0(a) &= i_{n+1}(f^{n+1}(a)) &&\text{(using $g^{n+1}$)}\\ &\equiv i_{n+1}(f(f^n(a)))\\ &=
i_{n+1}(f(b)) &&\text{(using $e$)}\\ &= i_n(b) &&\text{(using $g$)}
\end{align*}

Note that by definition $g^{n+1}(a) \equiv g(f^n(a))\ \cdot\ g^n(a)$, so the triangle on the left of
Figure~\ref{f:defp} is a definitional equality.


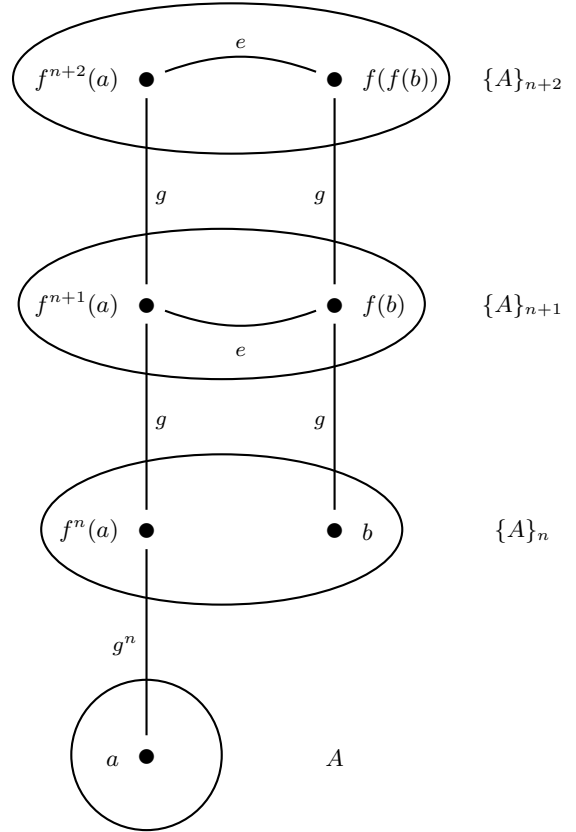
\begin{figure}\begin{center}
\begin{tikzpicture}[>=stealth',auto,node distance=3cm,
  thick,main node/.style={font=\sffamily\Large\bfseries},text height=2ex]

  \node[main node,label=left:$a$] (a) at (0,0) {$\bullet$}; \node[main node,label=left:$f^n(a)$]
  (fa) [above of=a] {$\bullet$}; \node[main node,label=left:$f^{n+1}(a)$] (ffa) [above of=fa]
       {$\bullet$}; \node[main node,label=left:$f^{n+2}(a)$] (fffa) [above of=ffa]{$\bullet$};
       \tikzset{node distance=2.5cm}; \node[main node,label=right:$b$] (b) [right of=fa]
               {$\bullet$}; \node[main node,label=right:$f(b)$] (fb) [right of=ffa] {$\bullet$};
               \node[main node,label=right:$f(f(b))$] (ffb) [right of=fffa]{$\bullet$}; \node (Am2)
                    [right of=ffb]{$\{A\}_{n + 2}$}; \node (Am1) [right of=fb] {$\{A\}_{n + 1}$};
                    \node (Am) [right of=b] {$\{A\}_n$}; \node (An) [right of=a] {$A$}; \draw
                    ($(fffa)!0.45!(ffb)$) ellipse (2.9cm and 1cm); \draw ($(ffa)!0.4!(fb)$) ellipse
                    (2.7cm and 1cm); \draw ($(fa)!0.4!(b)$) ellipse (2.4cm and 1cm); \draw (a)
                    circle (1cm); \path[every node/.style={font=\sffamily\small}] (fa) edge node
                    [left] {$g^n$} (a) edge node [right] {$g$} (ffa) (fb) edge [bend left=20] node
                    [below] {$e$} (ffa) edge node [left] {$g$} (b) edge node [left] {$g$} (ffb)
                    (fffa) edge node [right] {$g$} (ffa) edge [bend left=20] node [above] {$e$}
                    (ffb);
\end{tikzpicture}
\caption{The coherence condition for $p$. The applications of $i$ and the arguments of the paths are
  implicit.}
\label{f:cohp}
\end{center}\end{figure}

Now we have to show that this definition of $p$ respects the path constructor of the colimit, which
means that we need to show~\eqref{e:coh}. This is displayed in Figure~\ref{f:cohp}.  We only need to
fill the square in Figure~\ref{f:cohp}. To do this, we first need to generalize the statement,
because we want to apply path induction. Note that if we give the applications of $i$ explicitly,
the bottom and the top of this square are $$\ap i{e(f^{n+1}(a),f(b))}$$ and $$\ap
i{e(f^{n+2}(a),f(f(b)))},$$ respectively. This means we can apply the following lemma to prove this
equality.

\begin{figure}\begin{center}
\begin{tikzpicture}[>=stealth',auto,node distance=3cm,
  thick,main node/.style={font=\sffamily\Large\bfseries},text height=2ex]

  \node[main node,label=left:$i(x)$] (x) at (0,0) {$\bullet$}; \node[main node,label=left:$i(f(x))$]
  (fx) [above of=x] {$\bullet$}; \tikzset{node distance=2.5cm}; \node[main node,label=right:$i(y)$]
  (y) [right of=x] {$\bullet$}; \node[main node,label=right:$i(f(y))$] (fy) [right of=fx]
       {$\bullet$}; \node (Ak1) [right of=fy] {$\{A\}_{n + 1}$}; \node (Ak) [right of=y]
       {$\{A\}_n)$}; \draw ($(fx)!0.5!(fy)$) ellipse (2.8cm and 1cm); \draw ($(x)!0.5!(y)$) ellipse
       (2.6cm and 1cm); \path[every node/.style={font=\sffamily\small}] (x) edge [bend right=20]
       node [below] {$\ap ip$} (y) edge node [right] {$g$} (fx) (fy) edge [bend right=20] node
       [above] {$\ap i{p'}$} (fx) edge [bend left=20] node [below] {$\ap i{\ap fp}$} (fx) edge node
       [left] {$g$} (y);
\end{tikzpicture}
\caption{The situation in Lemma~\ref{l:cohplemma}.}
\label{f:cohplemma}
\end{center}\end{figure}
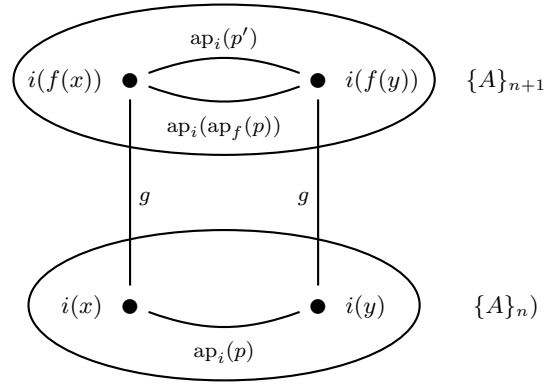

\begin{lemma}\label{l:cohplemma}
  Suppose we are given $x,y : \{A\}_n$, $p:x=y$ and $p' : f(x) = f(y)$. Then we can fill the outer
  square in Figure~\ref{f:cohplemma}, i.e.
    $$g(x)\ \cdot\ \ap ip = \ap i{p'}\ \cdot\ g(y).$$
\end{lemma}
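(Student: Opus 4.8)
The plan is to factor the equality $g(x)\cdot\ap ip = \ap i{p'}\cdot g(y)$ through the intermediate path $\ap{i_{n+1}}{\ap{f_n}{p}}$ --- equivalently, to split the outer square of Figure~\ref{f:cohplemma} along its inner edge $\ap i{\ap fp}$ --- proving separately that (i)~$g(x)\cdot\ap ip = \ap i{\ap fp}\cdot g(y)$ and (ii)~$\ap{i_{n+1}}{\ap{f_n}{p}} = \ap{i_{n+1}}{p'}$, and then concatenating. Part (i) is routine: it is the naturality of homotopies~\cite[Lemma~2.4.3]{HoTTbook} applied to the homotopy $g_n : i_{n+1}\circ f_n \sim i_n$ given by the colimit's path constructor and to the path $p : x = y$, combined with the functoriality law $\ap{i_{n+1}\circ f_n}{p} = \ap{i_{n+1}}{\ap{f_n}{p}}$.

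Part (ii) is where the content lies. I would first record a general fact: if $h$ is homotopic to a weakly constant map, then $\myap_h$ is weakly constant on every path type. This follows from naturality of homotopies --- which expresses $\ap hr$ as a conjugate of $\ap{h'}r$ for any homotopy $h \sim h'$ --- together with Lemma~\ref{l:apconstant} applied to $h'$. Now $i_{n+1}$ is, via the path constructor $g_{n+1}$, homotopic to $i_{n+2}\circ f_{n+1}$, and this composite is weakly constant, being the weakly constant map $f_{n+1}$ followed by $i_{n+2}$ (the witness of weak constancy of $f_{n+1}$ is the one-step-truncation constructor $e$). Hence $\myap_{i_{n+1}}$ is weakly constant; in particular $\ap{i_{n+1}}{\ap{f_n}{p}} = \ap{i_{n+1}}{p'}$, since both arguments have type $f(x) = f(y)$. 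This establishes (ii), and concatenating with (i) gives the lemma.

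I expect the main obstacle to be noticing that part (ii) forces one to pass to stage $\{A\}_{n+2}$: the map $i_{n+1}$ is not itself visibly weakly constant, but it is homotopic to the composite $i_{n+2}\circ f_{n+1}$, which is, and that is exactly what is needed to make $\myap_{i_{n+1}}$ weakly constant. The remaining work --- rearranging concatenations and inverses in (i) and (ii), checking that $f_{n+1}$ is weakly constant via $e$, and the conjugation identities hidden in ``naturality of homotopies'' --- is coherence-law bookkeeping that path induction dispatches at once. An alternative organization is to do path induction on $p$ at the very start, reducing to $y \equiv x$ and $p \equiv \refl_x$, where the goal collapses to the statement that $\ap{i_{n+1}}{p'}$ is reflexivity; one then runs the same argument, now comparing $p'$ with $\refl_{f(x)}$ via naturality of $g_{n+1}$ and Lemma~\ref{l:apconstant} for $f_{n+1}$.
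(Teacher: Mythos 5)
Your proposal is correct and follows essentially the same route as the paper: split the outer square into the inner square (filled by path induction on $p$, i.e.\ naturality of $g$) plus the equality of the two top paths, and obtain the latter from Lemma~\ref{l:apconstant} after observing that $i_{n+1}$ is weakly constant because it is homotopic via $g_{n+1}$ to $i_{n+2}\circ f_{n+1}$, whose weak constancy comes from the constructor $e$ one stage up. The only cosmetic difference is that the paper first transfers weak constancy from $i_{n+2}\circ f_{n+1}$ to $i_{n+1}$ itself and then applies Lemma~\ref{l:apconstant}, whereas you conjugate after applying it; the content is identical.
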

\begin{proof}
We can fill the inner square of the diagram by induction on $p$, because if $p$ is reflexivity then
the inner square reduces to
  $$g(x)\ \cdot\ \refl_{i(x)}=\refl_{i(f(x))}\ \cdot\ g(x).$$ To show that the two paths in the top
are equal, first note that $i_{k} : \{A\}_{k} \to \{A\}_\infty$ is weakly constant. To see this,
look at Figure~\ref{f:defp}. The path from $f^n(a)$ to $b$ in that figure gives a proof of
$i_n(f^n(a))=i_n(b)$ which does not use the form of $f^n(a)$, so we also have $i_k(u)=i_k(v)$ for
$u,v :\{A\}_k$. Since $i_{n+1}$ is weakly constant, by Lemma~\ref{l:apconstant} the
function $$\text{ap}_{i_{n+1}} : f(x)=f(y)\to i_{n+1}(f(x))=i_{n+1}(f(y))$$ is also weakly
constant. This means that the two paths in the top are equal, proving the Lemma.
\end{proof}
We have now given the proof of the following theorem:

\begin{theorem}\label{t:main}
The map $A\mapsto \{A\}_\infty$ satisfies all the properties of the propositional truncation
$\|{-}\|$, including the universe level and judgmental computation rule.
\end{theorem}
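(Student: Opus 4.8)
The plan is to check the propositional-truncation rules one at a time against $\{A\}_\infty:\equiv\colim(\{A\}_-,f_-)$, where $\{A\}_0:\equiv A$ and $\{A\}_{n+1}:\equiv\{\{A\}_n\}$, in the order formation, introduction, elimination, computation, and last the path constructor. The first four are essentially bookkeeping. Formation holds because the one-step truncation and the sequential colimit are both formed in the universe of their inputs, so the whole sequence $\{A\}_-$ and hence $\{A\}_\infty$ stays at the level of $A$. Introduction is the map $|{-}|:\equiv i_0:A\to\{A\}_\infty$. For elimination, given $P:\{A\}_\infty\to\prop$ and $h:\fa{a:A}{P(i_0(a))}$, I would build the section $k:\fa{x:\{A\}_\infty}{P(x)}$ by colimit induction on $x$: since $P$ is valued in mere propositions, the obligations coming from the path constructor $g$ are discharged automatically, so it suffices to define $k(i_n(a))$, and I then induct on $n$, setting $k(i_0(a)):\equiv h(a)$ in the base case and, when $n\equiv\ell+1$, inducting once more on $a:\{\{A\}_\ell\}$ (again the path constructor is automatic) and transporting the induction hypothesis $k(i_\ell(b)):P(i_\ell(b))$ backwards along $g_\ell(b)$. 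The clause $k(i_0(a)):\equiv h(a)$ is then exactly the judgmental computation rule, so nothing further is required for the first four rules.

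The real content is the path constructor, i.e. proving $\fa{x,y:\{A\}_\infty}{x=y}$. Here I would first invoke Lemma~\ref{l:pieq} to see that $\fa{y:\{A\}_\infty}{x=y}$ is itself a mere proposition; this legitimizes applying the elimination principle just established to the variable $x$, reducing the goal to exhibiting, for each $a:A$, an element of $\fa{y:\{A\}_\infty}{i_0(a)=y}$. Since $i_0(a)=y$ is not yet known to be a proposition, the inner quantifier has to be handled by bare colimit recursion, which asks for two things: a family $p(a,b):i_0(a)=i_n(b)$ for all $n$, $a:A$, $b:\{A\}_n$, and the coherence $p(a,f(b))\cdot g(b)=p(a,b)$ of~\eqref{e:coh}. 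I would define $p(a,b)$ as the concatenation pictured in Figure~\ref{f:defp}: the iterated $g$-path $g^{n+1}(a)$, then $\ap i{e(f^n(a),b)}$, then $g(b)$; the definitional identity $g^{n+1}(a)\equiv g(f^n(a))\cdot g^n(a)$ makes the left-hand triangle of that figure commute on the nose.

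The step I expect to be the main obstacle is the coherence~\eqref{e:coh}, i.e. filling the square in Figure~\ref{f:cohp}. Once the two $g$-edges and the iterated paths are arranged, what remains is an instance of Lemma~\ref{l:cohplemma}. To prove that lemma one fills an inner square by path induction on $p$, which is trivial when $p$ is $\refl$, and is then left with two parallel one-step paths on the top edge; this is where the key observation enters, namely that each $i_k:\{A\}_k\to\{A\}_\infty$ is weakly constant --- the path from $f^n(a)$ to $b$ in Figure~\ref{f:defp} does not use the form of $f^n(a)$, so the same construction gives $i_k(u)=i_k(v)$ for all $u,v:\{A\}_k$ --- and hence by Lemma~\ref{l:apconstant} the action of $i_{n+1}$ on paths $f(x)=f(y)$ is weakly constant, forcing the two top paths to agree. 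Assembling these pieces --- formation, introduction, elimination with its judgmental computation rule, and the resulting proof that $\{A\}_\infty$ is a mere proposition --- yields every rule of $\|{-}\|$, which is the theorem.
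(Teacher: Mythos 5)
Your proposal is correct and follows essentially the same route as the paper: the same treatment of formation, introduction, elimination and the judgmental computation rule, the same reduction of the path constructor via Lemma~\ref{l:pieq} and colimit recursion to the family $p(a,b)$ of Figure~\ref{f:defp}, and the same resolution of the coherence~\eqref{e:coh} through Lemma~\ref{l:cohplemma}, using the weak constancy of $i_k$ together with Lemma~\ref{l:apconstant}. No gaps.
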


\section{Consequences}\label{s:consequences}
As discussed in the introduction, we have the following immediate corollary of Theorem~\ref{t:main}.
\begin{corollary}\label{c:main}\mbox{}
\begin{itemize}
\item In a type theory with a propositional truncation operation, $\{A\}_\infty\simeq \|A\|$.
\item In a type theory without propositional truncation operation, we can define it as $\|A\|:\equiv
  \{A\}_\infty$.
\end{itemize}
\end{corollary}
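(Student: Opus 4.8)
The plan is to check, one rule at a time, that the candidate $\{A\}_\infty :\equiv \colim(\{A\}_{-},f_{-})$ --- where $\{A\}_{-}$ is the sequence~\eqref{e:An} of iterated one-step truncations --- satisfies the formation, introduction, elimination and computation rules of the propositional truncation, and moreover is a mere proposition and lives in the universe $\U$ of $A$.

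Formation is immediate: a sequential colimit of a sequence of types in $\U$ is again a type in $\U$. Introduction is the map $i_0 : A \to \{A\}_\infty$, which plays the role of $|{-}|$. For elimination, suppose $P : \{A\}_\infty \to \prop$ together with $h : \fa{a:A}{P(i_0(a))}$ are given; I would construct $k : \fa{x:\{A\}_\infty}{P(x)}$ by colimit induction on $x$. Since every fibre $P(x)$ is a proposition, the path-constructor obligations of the colimit are discharged for free, so it suffices to define $k$ on $i_n(a)$, which I do by induction on $n$: for $n \equiv 0$ put $k(i_0(a)) :\equiv h(a)$ --- this is also the required judgmental computation rule --- and for $n \equiv \ell + 1$, using a further one-step-truncation induction on $a$ (whose path obligation is again automatic) I may assume $a \equiv f(b)$ and transport the inductive hypothesis $k(i_\ell(b)) : P(i_\ell(b))$ along $(g_\ell(b))^{-1}$ to obtain an element of $P(i_{\ell+1}(f(b)))$.

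The real content is to show $\{A\}_\infty$ is a mere proposition, i.e.\ $\fa{x\ y:\{A\}_\infty}{x=y}$. The key idea is to feed this goal back into the elimination rule just established: by Lemma~\ref{l:pieq} the type $\fa{y:\{A\}_\infty}{x=y}$ is a proposition, so induction on $x$ reduces the task to producing, for each $a:A$, an inhabitant of $\fa{y:\{A\}_\infty}{i_0(a)=y}$. This last type is not obviously a proposition, so for $y$ I would use ordinary colimit induction, which leaves two obligations: first, a family $p(a,b) : i_0(a) = i_n(b)$ for $a:A$ and $b:\{A\}_n$; and second, the coherence law $p(a,f(b))\cdot g(b) = p(a,b)$ of~\eqref{e:coh}. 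For the first I would let $p(a,b)$ be the composite that climbs from $i_0(a)$ to $i_{n+1}(f^{n+1}(a))$ along the iterated path $g^{n+1}$, crosses to $i_{n+1}(f(b))$ via $\ap{i}{e(f^n(a),b)}$, and then descends to $i_n(b)$ via $g$.

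The coherence obligation is where I expect the main difficulty, and I would isolate it as a separate lemma (essentially Lemma~\ref{l:cohplemma}): after unfolding $p$, one must prove $g(x)\cdot\ap{i}{p} = \ap{i}{p'}\cdot g(y)$ for arbitrary $p : x = y$ and $p' : f(x) = f(y)$ in $\{A\}_n$. I would split this into a naturality square, handled by path induction on $p$ so that the $\refl$ case collapses to a triviality, leaving only the two top edges $\ap{i}{\ap{f}{p}}$ and $\ap{i}{p'}$ to be identified. These are equal because $i_{n+1}$ is weakly constant --- the path $p(a,b)$ constructed above, read with $f^n(a)$ replaced by an arbitrary element of $\{A\}_n$, already exhibits $i_k(u) = i_k(v)$ for all $u,v : \{A\}_k$ --- so Lemma~\ref{l:apconstant} applies to $i_{n+1}$ and yields $\ap{i_{n+1}}{\ap{f}{p}} = \ap{i_{n+1}}{p'}$, the two top edges being parallel paths. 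Combining the two obligations gives the path constructor of $\|{-}\|$, completing the verification. The principal obstacle is the two-dimensional bookkeeping inside this coherence lemma, and in particular recognizing that weak constancy of $i_{n+1}$ --- obtained from weak constancy of the $i_k$ via Lemma~\ref{l:apconstant} --- is exactly what bridges the two parallel top edges.
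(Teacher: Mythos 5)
Your proposal is correct and follows essentially the same route as the paper: the paper states this corollary as an immediate consequence of Theorem~\ref{t:main}, and your verification of the formation, introduction, elimination and computation rules, the definition of $p(a,b)$ via $g^{n+1}$, $e$ and $g$, and the coherence argument via path induction plus weak constancy of $i_{n+1}$ and Lemma~\ref{l:apconstant} is precisely the content of Section~\ref{s:proof}. The only cosmetic slip is in your closing sentence, where weak constancy of $i_{n+1}$ is attributed to Lemma~\ref{l:apconstant}; as you correctly state earlier, it comes directly from the path construction, and the lemma is then applied to transfer weak constancy to $\mathrm{ap}_{i_{n+1}}$.
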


In HoTT, there are multiple gradations for functions to be constant. We use the terminology
from~\cite{Shulman2015constantnessblog}.

\begin{definition}
Suppose we are given a function $f:A\to B$.
\begin{itemize}
\item $f$ is \emph{weakly constant} if $\fa{x, y : A}{f(x) = f(y)}$;
\item $f$ is \emph{conditionally constant} if it factors through $\|A\|$, i.e. if $$\ex{g:\|A\|\to
  B}{\fa{a : A}{f(a)=g(|a|)}};$$
\item $f$ is \emph{constant} if $\ex{b : B}{\fa{a : A}{f(a)=b}}$.
\end{itemize}
\end{definition}
It is not hard to see that a constant function is conditionally constant, and that a conditionally
constant function is weakly constant. Note that these definitions are not generally mere
propositions.

We can also use the universal property of the colimit to get a new universal property for the
propositional truncation, which specifies the function space $\|A\|\to B$ not only if $B$ is a mere
proposition, but for an arbitrary type $B$. Recall the definition of $\{A\}_n$ (defined in
\eqref{e:An}).

\begin{corollary}\label{c:universal} Let $A$ and $B$ be types.
\begin{enumerate}
\item We have the following universal property for the propositional truncation:
\begin{gather*}
  (\|A\|\to B)\simeq\qquad\mbox{}\\
  (\ex{h:\fax{n}{\{A\}_n\to B}}{\fax{n}{h_{n+1}\circ f \sim h_n}}).
\end{gather*}

\item For a function $k: A \to B$ we have:
\begin{gather*}
  (k\textup{ is conditionally constant})\simeq\qquad\mbox{}\\
  (\ex{h:\fax{n}{\{A\}_n\to B}}{(\fax{n}{h_{n+1}\circ f\sim h_n})\times h_0\sim k}).
\end{gather*}
\end{enumerate}
\end{corollary}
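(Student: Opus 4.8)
The plan is to derive both parts from Theorem~\ref{t:main} together with the universal property of the sequential colimit. Recall that a \emph{cocone} under the diagram $\{A\}_{-}$ with vertex $B$ consists of a family $h:\fax{n}{\{A\}_n\to B}$ together with a proof $\fax{n}{h_{n+1}\circ f\sim h_n}$ that the triangles commute; this is exactly the $\Sigma$-type on the right of part~1. The recursion principle for the colimit HIT turns such a cocone $(h,H)$ into a map $\{A\}_\infty\to B$, and the induction principle shows that this assignment is an equivalence; this is the standard universal property of a sequential colimit, proved just as the analogous statement for pushouts in~\cite[Section 6.8]{HoTTbook}. The recursion principle comes with the computation rule that the map associated to $(h,H)$ sends $i_n(a)$ to $h_n(a)$.

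Part~1 is then a composition of equivalences. By Theorem~\ref{t:main} (see also Corollary~\ref{c:main}) we have $\|A\|\simeq\{A\}_\infty$, so precomposition gives $(\|A\|\to B)\simeq(\{A\}_\infty\to B)$; and the universal property above gives $(\{A\}_\infty\to B)\simeq(\ex{h:\fax{n}{\{A\}_n\to B}}{\fax{n}{h_{n+1}\circ f\sim h_n}})$. Call the composite equivalence $\phi$.

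For part~2 we unfold the definitions. By definition $k$ is conditionally constant iff $\ex{g:\|A\|\to B}{g\circ|{-}|\sim k}$ (the direction of the homotopy being immaterial, by pointwise inversion). The point constructor $|{-}|:A\to\|A\|$ is, by the construction in Section~\ref{s:proof}, identified under $\|A\|\simeq\{A\}_\infty$ with the colimit inclusion $i_0:\{A\}_0\to\{A\}_\infty$ (where $\{A\}_0\equiv A$), so this type is equivalent to $\ex{g:\{A\}_\infty\to B}{g\circ i_0\sim k}$. Now reindex the $\Sigma$ along $\phi$ in the bound variable $g$: since total spaces of pulled-back families are equivalent, we obtain $\ex{(h,H)}{\phi\sy(h,H)\circ i_0\sim k}$, where $(h,H)$ ranges over the cocone type. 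But $\phi\sy(h,H)$ is precisely the map recursion produces from the cocone $(h,H)$, so by the computation rule $\phi\sy(h,H)\circ i_0$ equals $h_0$ (up to homotopy), and the side condition becomes $h_0\sim k$. Finally, reassociating $\ex{(h,H):\ex{h}{\fax{n}{h_{n+1}\circ f\sim h_n}}}{h_0\sim k}$ --- a routine rearrangement of nested $\Sigma$-types, valid because the predicate $h_0\sim k$ does not mention $H$ --- puts the type in the claimed form $\ex{h:\fax{n}{\{A\}_n\to B}}{(\fax{n}{h_{n+1}\circ f\sim h_n})\times h_0\sim k}$.

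The main obstacle is the bookkeeping in part~2: one has to be sure that $\phi$ really sends a map $g:\{A\}_\infty\to B$ to the cocone whose $0$-th component is $g\circ i_0$, so that the condition $g\circ|{-}|\sim k$ translates cleanly into $h_0\sim k$. This rests on (i) the identification of $|{-}|$ with $i_0$, immediate from the way the point constructor of $\{A\}_\infty$ was defined in Section~\ref{s:proof}, and (ii) the colimit computation rule, which holds at least propositionally --- and correcting $\phi$ by that homotopy changes nothing essential. Everything else (composing equivalences, inverting homotopies pointwise, and reassociating $\Sigma$- and $\times$-types) is routine.
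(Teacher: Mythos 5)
Your proof is correct and follows exactly the route the paper takes: part~1 is the universal property of the sequential colimit composed with the identification $\|A\|\simeq\{A\}_\infty$ from Theorem~\ref{t:main}, and part~2 is obtained from part~1 by routine $\Sigma$-type manipulations. The paper's own proof is just a two-sentence summary of this; your write-up supplies the same argument in full detail, including the correct bookkeeping that identifies $|{-}|$ with $i_0$ and uses the colimit computation rule to turn the factorization condition into $h_0\sim k$.
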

\begin{proof}
The first part is just the universal property of the colimit. The second part follows from the first
part, using some basic equivalences about $\Sigma$-types.
\end{proof}

We can use our theorem to give a proof of the following.
\begin{corollary}\label{c:hstable}
Every \emph{collapsible} type \emph{has split support}. That is, given a weakly constant function $h
: A \to A$, then there is a function $\|A\|\to A$.
\end{corollary}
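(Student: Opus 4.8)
The plan is to invoke Theorem~\ref{t:main}, which identifies $\|A\|$ with the sequential colimit $\{A\}_\infty=\colim(\{A\}_-,f_-)$, and then to read off the conclusion from the universal property of that colimit as packaged in Corollary~\ref{c:universal}(1): a function $\|A\|\to A$ is the same datum as a cocone over the sequence~\eqref{eq:sequence} with vertex $A$, i.e.\ a family $k_n:\{A\}_n\to A$ together with homotopies $k_{n+1}\circ f\sim k_n$ for every $n$. So the whole task is to build such a cocone out of the weakly constant endomap $h:A\to A$; fix a witness $c:\fa{x,y:A}{h(x)=h(y)}$ of weak constancy.

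It is worth saying why the obvious attempt fails, since that failure points at what is really needed. One would set $k_0:\equiv h$, take $k_1:\{A\}\to A$ to be the factorization of $h$ through $f$ supplied by the universal property of the one-step truncation, and iterate. But to define $k_2:\{\{A\}\}\to A$ by the recursion principle of $\{{-}\}$ one must already know that $k_1$ is weakly constant, and there is no reason why the factorization of a weakly constant map is weakly constant again --- weak constancy is not coherent data --- so the iteration stalls immediately.

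The remedy is to post-compose with $h$ at every stage. Two elementary observations make it go: (i) for \emph{any} map $m:Y\to A$ the composite $h\circ m$ is weakly constant, with witness $(y,y')\mapsto c(m(y),m(y'))$; and (ii) $h\circ h\sim h$, since $c(h(w),w):h(h(w))=h(w)$. Build the cocone recursively, keeping each $k_n$ of the form ``$h$ after something''. Put $k_0:\equiv h$ (so $k_0=h\circ\id_A$). Given $k_n:\{A\}_n\to A$, which is weakly constant by~(i), let $\bar k_n:\{A\}_{n+1}\to A$ be the factorization through $f$ from the universal property of $\{{-}\}$ (so $\bar k_n\circ f\sim k_n$), and set $k_{n+1}:\equiv h\circ\bar k_n$. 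Then $k_{n+1}$ is again of the form ``$h$ after something'', hence weakly constant by~(i), so the recursion continues. The cocone square commutes because $k_{n+1}\circ f=h\circ(\bar k_n\circ f)\sim h\circ k_n$, and $h\circ k_n\sim k_n$: writing $k_n=h\circ g$ we get $h\circ k_n=h\circ h\circ g\sim h\circ g=k_n$ by~(ii), precomposed with $g$. (For $n=0$ this is simply $k_1\circ f\sim h\circ h\sim h=k_0$.)

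Feeding this cocone into the universal property of the sequential colimit produces a map $\{A\}_\infty\to A$, which by Theorem~\ref{t:main} (equivalently Corollary~\ref{c:main}) is the required function $\|A\|\to A$. I expect the only delicate point to be the one isolated above: recognizing that the \emph{endo}map hypothesis --- rather than merely a weakly constant map into some auxiliary type --- is exactly what allows weak constancy to be restored at each level (by~(i)) without destroying the cocone condition (thanks to the idempotence~(ii)). Everything else is routine manipulation of the recursion principles for $\{{-}\}$ and $\colim$.
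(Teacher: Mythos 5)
Your proof is correct and takes essentially the same route as the paper: both build a cocone of maps $\{A\}_n\to A$ over the sequence and force the triangles to commute by postcomposing with $h$, using that weak constancy gives $h\circ h\sim h$, and then invoke the universal property of the colimit via Theorem~\ref{t:main}. The only cosmetic difference is that the paper first defines uncorrected maps $k_{n+1}:\equiv\tilde h\circ\{k_n\}$ via the functorial action of $\{{-}\}$ (starting from $k_0:\equiv\id$) and postcomposes with $h$ once at the end, whereas you interleave the postcomposition into the recursion so that each stage remains weakly constant and can be factored directly by the recursion principle of $\{{-}\}$.
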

\begin{proof}
The weakly constant function $h$ gives a function $\tilde h : \{A\}\to A$. The HIT $\{{-}\}$ is
functorial (just like all other HITs), so by its functorial action we get a map $\{\tilde
h\}:\{\{A\}\}\to\{A\}$, which we can compose with $\tilde h$ to get a map $\{\{A\}\}\to A$. By
induction on $n$ we get a map $k_n : \{A\}_n \to A$. Formally, we define
\begin{align*}
k_0(a)&:\equiv a\\ k_{n+1}(x)&:\equiv \tilde h(\{k_n\}(x))
\end{align*}
However, this sequence of maps does not form a cocone, because the triangles do not commute. (For
example for the first triangle we have to show $h(a)=a$ for all $a$.) But we can easily modify the
definition by postcomposing with $h$. Define $h_n:\equiv h\circ k_n : \{A\}_n\to A$. Now we get a
cocone; all triangles commute because $h$ is weakly constant. By Corollary~\ref{c:universal} we get
a map $\|A\|\to A$.
\end{proof}

\section{Formalization}\label{s:formalization}

\begin{table}
  \begin{center}
  \begin{tabular}{|c|p{0.7\columnwidth}|}
    \hline
    Lem~\ref{l:apconstant} & \texttt{weakly\_constant\_ap} \\
    Prop~\ref{p:notconnected} & \texttt{tr\_eq\_ne\_idp}\par
      \texttt{not\_inhabited\_hset\_trunc\_one\_step\_tr}\par
      \texttt{trunc\_0\_one\_step\_tr\_equiv} \\
    Lem~\ref{l:pieq} & in standard HoTT library \\
    Lem~\ref{l:cohplemma} & \texttt{ap\_f\_eq\_f} \\
    Th~\ref{t:main} & \texttt{is\_hprop\_truncX} \\
    Cor~\ref{c:main} & the definitions below \texttt{is\_hprop\_truncX}
      define propositional truncation,\par
      \texttt{trunc\_equiv} \\
    Cor~\ref{c:universal} & \texttt{elim2\_equiv} \par
      \texttt{conditionally\_constant\_equiv} \\
    Cor~\ref{c:hstable} & \texttt{has\_split\_support\_of\_is\_collapsible} \\
    \hline
  \end{tabular}
  \end{center}
\caption{Names of Theorems in the formalization}\label{t:formalization}
\end{table}
All the results in this paper have been formally verified in the proof assistant Lean. The
formalization is a single file, available at
\myurl{https://github.com/fpvandoorn/leansnippets/blob/master/cpp.hlean}. To compile the file, follow
these steps:
\begin{enumerate}
\item Download this file as \texttt{cpp.hlean}.
\item Install Lean via the instructions provided at \myurl{http://leanprover.github.io/download/}.
\item Either run \ \texttt{lean cpp.hlean} \ via your terminal, or open the file \texttt{cpp.hlean}
  in Emacs and execute it using \texttt{C-c C-x}.
\end{enumerate}

The injection from the theorems in this paper to the theorems in the formalization is given in
Table~\ref{t:formalization}. The formalization closely follows the proof presented here. Actually, a
large part the proof was first given in Lean, and found by proving successive goals given by
Lean. After the proof was given in Lean, I had to ``unformalize'' it to a paper proof. So the proof
assistant actively helped with constructing the proof. However, when unformalizing the proof, I
gained a much better insight in broader picture of the proof, the proof assistant doesn't help very
much with that goal.

The formalization heavily uses Lean's \emph{tactic proofs}. In Lean you can give a proof of a
theorem either by giving an explicit proof term (as in Agda), or by successively applying tactics to
the current goal, changing the goal (often using backwards reasoning), as in Coq. We mainly used
tactic proofs, because proofs using tactics are often shorter and quicker to write. This comes at
the cost of readability. Tactic proofs are often less readable than their declarative counterparts,
since the proof script is only one part of a ``dialogue'' between the user and the proof
assistant. The other part --- the goals given by the proof assistant --- are not given in the proof
script. However, you can request the goal state at a particular point. To do this, open the file in
Emacs, and place the point at the desired location. Now press \texttt{C-c C-g} to view the
goal. Similarly, you can see the type of a definition by moving the point on it and pressing
\texttt{C-c C-p}. For more information on how to interact with Lean, see the Lean
tutorial~\cite{Avigad2015tutorial}.

In Lean, we define the one-step truncation using the ``quotient,'' which is a primitive higher
inductive type in Lean. The quotient is the following HIT. Given a type $A$ and a type-valued
relation $R : A \to A \to \U$. Then the quotient has two constructors:
\begin{itemize}
\item $\iota : A \to \quotient_A(R)$
\item $\fa{x\ y : A}{R(x,y)\to\iota(x)=\iota(y)}$.
\end{itemize}
The quotient allows us to easily define a large class of HITs. We can define all HITs satisfying the
following conditions:
\begin{itemize}
\item It has only point and 1-path constructors.
\item All constructors are nonrecursive.
\item The path constructors don't mention the other path constructors.
\end{itemize}
In this case, we can take $A$ to be the type defined by the point constructors, and the relation $R$
generated by the path constructors. Although this list may seem restrictive, a lot of HITs still
fall into this class. The sequential colimit and one-step truncation fall in this class. Other
examples include the circle, suspensions, coequalizers and pushouts.

What's more interesting is that the quotient also defines HITs which do \emph{not} fall in this
class. In this paper we have demonstrated that the propositional truncation can be defined just
using quotients. Another class of HITs which can be defined using quotients is HITs with
2-constructors. These 2-constructors are equalities between concatenations of 1-constructors. The
exact HITs we can form is described in \cite[section HITs]{vanDoorn2015Leanblog}. This construction
allows us to construct HITs such as the torus and the groupoid quotient.

In Lean, the quotient is a primitive notion: the type former, constructors and recursor are
constants, and the computation rule for the recursor is added as a computation rule. This is also
done for one other HIT, namely $n$-truncations. Using just these two HITs, we can define all
commonly used HITs.

In Coq and Agda, HITs are usually defined using Dan Licata's trick~\cite{Licata2011trick}. To define
a HIT $X$ using this trick, you first define a normal inductive type $Y$ with the point
constructors. Then you add the path constructors of $X$ as constants/axioms to $Y$ and define the
induction principle of $X$ using the induction principle for $Y$. Now the path constructors are
inconsistent with the induction principle for $Y$, so you have to make sure to never accidentally
use the induction principle for $Y$ anymore. In Coq and Agda this can be done using private
inductive types, which means that the induction principle $Y$ is hidden outside the module where $Y$
is defined. Outside the module it's not visible that the HIT $X$ was defined in an inconsistent
way. The disadvantage of this way is that the implementation is inconsistent. This is different in
Lean, where we only add two HITs to the type theory, which can be consistently added.

\section{Conclusion, Related and Future Work}\label{s:related}

This construction of the propositional truncation as a colimit of types is a promising method to
reduce recursive HITs to non-recursive HITs. It might be possible to use a similar method to
construct more general HITs, such as the $n$-truncation, or more generally,
localizations~\cite{Shulman2011Localizationblog}. Another HIT which may be reducable to quotients is
the W-suspensions, as defined in~\cite{Sojakova2015HITinitial}. For the $n$-truncation the
construction given here can easily be generalized, but the proof that the resulting colimit is
$n$-truncated does not seem to generalize, and seems to require new ideas.

Corollary~\ref{c:hstable} was already known, and appears in~\cite[Theorem 3]{kraus2013hedberg} and
\cite[Theorem 4.5]{Kraus2014anonymousexistence}. These known proofs show that the type of fixed
points of a weakly constant endofunction $A \to A$ is a mere proposition, but there are multiple
proofs of this fact. Two of them are given in \cite[Lemma 4.1]{Kraus2014anonymousexistence}. Also, Lemma~\ref{l:apconstant} already occurs in~\cite[Proposition 3]{kraus2013hedberg}.

The generalized universal property as written in Corollary~\ref{c:universal} is new. It is a
promising theorem, which simplifies the construction of functions from the propositional truncation
to a type which is not truncated. One application of this universal property is already given by
Corollary~\ref{c:hstable}. This proof doesn't require coming up with a suitable proposition which is
an intermediate step between $\|A\|$ and $A$. The other proofs used as intermediate step the type of
fixed points of the endofunction.

Corollary~\ref{c:universal} is closely related to the main result in
\cite{Kraus2014UniversalProperty}. Their main result also gives condition which is equivalent to
finding a map $\|A\|\to B$ for an arbitrary type. The advantage of our universal property is that it
can be formulated internal to a type theory and that it has been formalized in the proof assistant
Lean. Kraus' universal property can only be formulated in a type theory which has certain Reedy
limits.

On the other hand, the advantage of Kraus' universal property over the one presented here is that it
reduces to a simpler condition when defining a map $\|A\|\to B$ if it's known that $B$ is an
$n$-type. In that case, their condition becomes giving only finitely many higher paths, which can be
formulated inside type theory without Reedy limits. Our universal property doesn't simplify given
that $B$ is an $n$-type. It may be possible to modify the colimit, so that the $n$-th term in the
sequence is $n$-connected. In that case, elimination to an $n$-type requires only finitely much
information, because at some point the cocone becomes trivial.

Another question which involves Corollary~\ref{c:universal} is whether it is possible to formulate a
similar universal property without defining the sequence $\{A\}_n$. This might simplify the
universal property.

In Section~\ref{s:intuition} we have given some properties of the one-step truncation. However, we
haven't given an exact characterization. It is not hard to see that the one-step truncation of a set
with $n$ elements is a wedge of $n^2-n+1$ circles.\footnote{The one-step truncation adds $n^2$ many
  paths, and we need to collapse $n-1$ of those paths to reduce the $n$ points to a single
  point. The result is a wedge of $n^2-(n-1)$ circles.} However, it is not clear whether the
structure of the one-step truncation of other types (which are not sets) can be described in more
simple terms, for example if $\{\{\bool\}\}$ can be described in simpler terms.

\acks

I would like to thank Egbert Rijke for the helpful discussions when searching for the proof in
Section~\ref{s:proof}. I have written a post with this result on the HoTT blog\footnote{available at
  \myurl{http://homotopytypetheory.org/2015/07/28/constructing-the-propositional-truncation-using-nonrecursive-hits/}}
and I would like to all those who wrote comments to the blog for their helpful ideas and input. I
would especially like to thank Nicolai Kraus for a big simplification of the proof presented in the
blog post. I would also like to thank Mike Shulman for ideas which led to
Proposition~\ref{p:notconnected} and Martin Escardo for discovering some of the
Corollaries. Finally, I would like to thank Leonardo de Moura and Jeremy Avigad for the countless
discussions and support for Lean-related issues.

\bibliographystyle{amsalphaurl}

\balance
\bibliography{proptrunc}





\end{document}